\newtheorem{theorem}{Theorem}
\newtheorem{cor}{Corollary}
\theoremstyle{definition}
\def\min{\mathop{\mathrm{min}}}
\newcommand{\C}{\mathbb{C}}
\newcommand{\PP}{\mathbb{P}}
\begin{document}
\title{{ A second main theorem for holomorphic curve intersecting hypersurfaces}}

\author{Nguyen Van Thin}
\address{Department of Mathematics, Thai Nguyen University of Education, Luong Ngoc Quyen Street, Thai Nguyen city, Viet Nam.}
\email{thinmath@gmail.com}

\thanks{2000 {\it Mathematics Subject Classification.} Primary 32H30.}
\thanks{Key words: Nevanlinna theory, Holomorphic curve.}

\begin{abstract}
In this paper, we establish a second main theorem for holomorphic curve intersecting hypersurfaces in general position
in projective space with level of truncation. As an application, we reduce the number hypersurfaces
 in uniqueness problem for holomorphic curve of authors before. 
\end{abstract}
\baselineskip=16truept 
\maketitle 
\pagestyle{myheadings}
\markboth{}{}
\section{Introduction and main results}

\def\theequation{1.\arabic{equation}}
\setcounter{equation}{0}

Let $f$ be a holomorphic curve of $\C$ into $\mathbb P^{n}(\C).$ For arbitrary fixed homogeneous coordinates 
$(w_0: \dots : w_n)$ of $\mathbb P^{n}(\C),$ we take a reduced representation $f=(f_0:\dots : f_n)$ which means that each $f_i$
is a holomorphic function on $\C$ without common zeros. Set 
$||f(z)||=\max\{|f_0(z)|, \dots, |f_n(z)|\}.$
The characteristic function of $f$ is defined by
$$ T_f(r)=\dfrac{1}{2\pi}\int_{0}^{2\pi}\log||f(re^{i\theta})||d\theta,$$
where the above definition is independent, up to an additive constant, of the choice of the reduced representation of $f.$
 
Let $D$ be a hypersurface in $\PP^n(\C)$ of degree $d$. Let $Q$ be the homogeneous polynomial of degree $d$ defining $D$. 
Under the assumption that $ Q(f) \not\equiv 0,$ the proximity function $m_f(r, D)$ of $f$ is defined by
$$m_{f}(r,D)=\dfrac{1}{2\pi}\int_{0}^{2\pi} \log \dfrac {\|f(re^{i\theta})\|^{d}}{|Q(f)(re^{i\theta})|} d\theta,$$
where the above definition is independent, up to an additive constant, of the choice of the reduced representation of $f$. 
To define the counting function, let $n_f(r, D)$ be the number of zeros of $Q(f)$ in the disk $|z | < r,$ counting multiplicity. The counting function $N_f(r, D)$ is then defined by
$$ N_f(r, D)=\int\limits_{0}^{r}\dfrac{n_f(t, D)-n_f(0, D)}{t}dt+n_f(0, D)\log r. $$
For a positive integer $M$, we denote by $n_f^{M}(r, D)$ the number of zeros of $Q(f)$ inside $|z | < r,$ counting multiplicities for those orders $\le  M,$ and being counted as $M$ if the order is $> M.$ The truncated counting function
to level $M$ is defined by
$$ N_f^{M}(r, D)=\int\limits_{0}^{r}\dfrac{n_f^{M}(t, D)-n_f^{M}(0, D)}{t}dt+n_f^{M}(0, D)\log r. $$

The Poisson-Jensen formula implies the First Main Theorem.
\begin{theorem} \cite{Ru1} Let $f: \mathbb C \to \mathbb P^N(\mathbb C)$ be a holomorphic
curve, and let $D$ be a hypersurface in $\mathbb P^N(\mathbb C)$ of degree $d$. If $f(\mathbb C)\not\subset D,$ then
for every real number $r$ with $0 < r < +\infty,$
$$ dT_f(r)=m_f(r, D)+N_f(r, D) +O(1),$$
where $O(1)$ is a constant independent of $r.$
\end{theorem}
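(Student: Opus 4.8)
The plan is to obtain the identity as a direct consequence of Jensen's formula (equivalently of the Poisson--Jensen formula) applied to the entire function $Q(f)$. Fix a reduced representation $f=(f_0:\dots:f_N)$, so that each $f_i$ is holomorphic on $\C$ and the $f_i$ have no common zero. Since $f(\C)\not\subset D$, the composition $Q(f)=Q(f_0,\dots,f_N)$ is a holomorphic function on $\C$ that is not identically zero: if $Q(f)\equiv 0$ we would have $f(z)\in D$ for every $z\in\C$, contradicting the hypothesis. Starting from the definition of the proximity function and splitting the integrand as
$$\log\frac{\|f(re^{i\theta})\|^{d}}{|Q(f)(re^{i\theta})|}=d\log\|f(re^{i\theta})\|-\log|Q(f)(re^{i\theta})|,$$
I would rewrite
$$m_f(r,D)=d\,T_f(r)-\frac{1}{2\pi}\int_0^{2\pi}\log|Q(f)(re^{i\theta})|\,d\theta,$$
the first term being exactly $d\,T_f(r)$ by the definition of the characteristic function.

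The key step is to evaluate the remaining integral by Jensen's formula. Write the local expansion of $Q(f)$ at the origin as $Q(f)(z)=c\,z^{\ell}+\dots$ with $c\neq 0$; then $\ell=n_f(0,D)$ is the order of vanishing of $Q(f)$ at $0$, and the zeros of $Q(f)$ in $|z|<r$ counted with multiplicity are precisely those counted by $n_f(t,D)$. Since $Q(f)$ is entire, hence pole-free, Jensen's formula, written with the normalization at the origin used in the definition of $N_f(r,D)$, gives
$$\frac{1}{2\pi}\int_0^{2\pi}\log|Q(f)(re^{i\theta})|\,d\theta=N_f(r,D)+\log|c|.$$
Combining this with the previous display yields $m_f(r,D)=d\,T_f(r)-N_f(r,D)+O(1)$, where the $O(1)$ term equals $-\log|c|$ and is a real number independent of $r$; rearranging gives the asserted First Main Theorem.

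Finally I would record the two routine verifications. First, the equality is independent, up to an additive constant, of the chosen reduced representation: any other reduced representation has the form $hf$ for a nowhere-vanishing entire function $h$, and then $\|hf\|^{d}/|Q(hf)|=\|f\|^{d}/|Q(f)|$ while $\log\|hf\|=\log|h|+\log\|f\|$, with $\frac{1}{2\pi}\int_0^{2\pi}\log|h(re^{i\theta})|\,d\theta=\log|h(0)|$ constant because $\log|h|$ is harmonic; hence each of $T_f(r)$, $m_f(r,D)$, $N_f(r,D)$ changes only by a constant. Second, one checks that the function $N_f(r,D)$ introduced above coincides with the Nevanlinna counting function of zeros of $Q(f)$ appearing in Jensen's formula, which is immediate from the definitions. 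I do not expect a genuine obstacle here: the content of the theorem is essentially Jensen's formula together with careful bookkeeping of the normalizations at $z=0$, and the only point requiring a moment's care is the harmonicity argument ensuring that all quantities are well defined up to $O(1)$.
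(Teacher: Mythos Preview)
Your proposal is correct and matches the paper's approach: the paper does not spell out a proof but simply remarks that ``the Poisson--Jensen formula implies the First Main Theorem'' and cites \cite{Ru1}, which is exactly the route you take via Jensen's formula applied to the entire function $Q(f)$. Your argument fills in the details the paper omits, and nothing more needs to be added.
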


Let $f:\mathbb C\to \mathbb P^{N}(\mathbb C)$ be a holomorphic curve. The map $f$ is said to be linearly 
non-degenerate if the image of $f$ is not contained in any linear proper subspace of $\mathbb P^{N}(\mathbb C).$ Under this
 assumption, in 1933, H. Cartan proved the following Second Main Theorem.
\begin{theorem}\cite{CA}\label{th2} Let $f: \mathbb C \to \mathbb P^N(\mathbb C)$ be a linearly nondegenerate holomorphic
  curve. Let $H_1, \dots, H_q$ be hyperplanes in $\mathbb P^N(\mathbb C)$ in general position. Then,
$$\| (q -(N + 1))T_f(r) \le \sum_{j=1}^{q}N_f^{N}(r, H_j) + o(T_f(r)),$$
here $''\|''$ means that the inequality holds for all $r\in [0, +\infty)$ except a set of finite Lebesgue measure.
\end{theorem}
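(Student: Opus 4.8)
The plan is to carry out Cartan's Wronskian argument. First I would fix a reduced representation $f=(f_0:\dots:f_N)$ and, for each $j$, choose a linear form $L_j(w)=\sum_{i=0}^{N}a_{ji}w_i$ with zero set $H_j$, so that $L_j(f):=\sum_{i=0}^{N}a_{ji}f_i$ is entire and $m_f(r,H_j)=\frac{1}{2\pi}\int_0^{2\pi}\log\frac{\|f(re^{i\theta})\|}{|L_j(f)(re^{i\theta})|}\,d\theta+O(1)$. Since $f$ is linearly nondegenerate, $f_0,\dots,f_N$ are $\C$-linearly independent, so the Wronskian $W:=W(f_0,\dots,f_N)$ is not identically zero; and for any $(N+1)$-element set $R=\{i_0,\dots,i_N\}\subset\{1,\dots,q\}$ the forms $L_{i_0},\dots,L_{i_N}$ are linearly independent (general position), hence $W(L_{i_0}(f),\dots,L_{i_N}(f))=c_R\,W$ for a nonzero constant $c_R$ depending only on the coefficient matrix; there are finitely many such $R$, so $|c_R|$ is bounded below by a positive constant.

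Next I would establish the basic pointwise inequality. By general position, no $N+1$ of the $H_j$ meet, so by compactness of $\PP^N(\C)$ there is $\delta>0$ such that at every $z$ at most $N$ of the numbers $|L_j(f)(z)|/\|f(z)\|$ are smaller than $\delta$. Ordering $|L_{i_1}(f)(z)|\le\dots\le|L_{i_q}(f)(z)|$, the $q-N-1$ largest factors $\|f(z)\|/|L_{i_k}(f)(z)|$, $k\ge N+2$, are each at most $\delta^{-1}$, so
$$\prod_{j=1}^{q}\frac{\|f(z)\|}{|L_j(f)(z)|}\le\delta^{-(q-N-1)}\,\frac{\|f(z)\|^{N+1}}{|c_{R(z)}|\,|W(z)|}\cdot\frac{\bigl|W(L_{i_1}(f),\dots,L_{i_{N+1}}(f))(z)\bigr|}{\prod_{k=1}^{N+1}|L_{i_k}(f)(z)|},$$
where $R(z)=\{i_1,\dots,i_{N+1}\}$. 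The ratio $W(g_1,\dots,g_{N+1})/(g_1\cdots g_{N+1})$ is a universal polynomial in the logarithmic derivatives $g_i^{(\ell)}/g_i$, so I would take $\log$, integrate over $|z|=r$, and apply the Logarithmic Derivative Lemma to the functions $L_{i_k}(f)$ (whose characteristics are $O(T_f(r))$): this contributes only $o(T_f(r))$ off a set of finite Lebesgue measure. Combined with Jensen's formula $\frac{1}{2\pi}\int_0^{2\pi}\log|W(re^{i\theta})|\,d\theta=N_W(r)+O(1)$, where $N_W$ counts the zeros of the Wronskian, this yields
$$\sum_{j=1}^{q}m_f(r,H_j)\le(N+1)\,T_f(r)-N_W(r)+o(T_f(r)).$$

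I would then feed in the First Main Theorem $m_f(r,H_j)=T_f(r)-N_f(r,H_j)+O(1)$ to get
$$(q-(N+1))\,T_f(r)\le\sum_{j=1}^{q}N_f(r,H_j)-N_W(r)+o(T_f(r)),$$
and finish with a local divisor estimate: at every point $a$ at most $N$ of the $L_j(f)$ vanish, and, choosing $R$ to contain all the indices of vanishing forms and using $W=c_R^{-1}W(L_{i_0}(f),\dots,L_{i_N}(f))$ together with the standard bound on the order of vanishing of a Wronskian, one gets $\operatorname{ord}_a W\ge\sum_{j=1}^{q}\max(\operatorname{ord}_a L_j(f)-N,0)$. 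Summing over the points $a$ with $|a|<r$ gives $N_W(r)\ge\sum_{j=1}^{q}\bigl(N_f(r,H_j)-N_f^{N}(r,H_j)\bigr)$, and substituting this produces $(q-(N+1))T_f(r)\le\sum_{j=1}^{q}N_f^{N}(r,H_j)+o(T_f(r))$ outside a set of finite Lebesgue measure, as claimed. The main obstacle is the Logarithmic Derivative Lemma in this form — bounding $\frac{1}{2\pi}\int_0^{2\pi}\log^+|g^{(\ell)}/g|\,d\theta$ by $o(T_f(r))$ with an exceptional set of finite measure — which together with the two Wronskian facts (the constant $c_R$ and the order-of-vanishing bound) and the compactness estimate for $\delta$ carries the whole proof; the remaining manipulations are routine applications of Jensen's formula and the First Main Theorem.
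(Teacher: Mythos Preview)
The paper does not give its own proof of this statement: Theorem~\ref{th2} is quoted from Cartan's 1933 paper \cite{CA} as a known result and is used only as a black box in the proof of Theorem~\ref{th3}. Your proposal is exactly the classical Cartan Wronskian argument and is correct as written; the ingredients you single out (the constant $c_R$ relating $W(L_{i_0}(f),\dots,L_{i_N}(f))$ to $W$, the compactness bound giving $\delta$, the logarithmic-derivative lemma applied to the ratios $L_{i_k}(f)^{(\ell)}/L_{i_k}(f)$, and the local order estimate $\operatorname{ord}_a W\ge\sum_{j}\max(\operatorname{ord}_a L_j(f)-N,0)$) are precisely the standard ones, so there is nothing to compare beyond noting that your write-up supplies what the paper simply cites.
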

M. Ru \cite{Ru1} extended the above Cartan's result to an algebraically non-degenerate holomorphic curve 
$f:\mathbb C\to \mathbb P^{N}(\mathbb C)$ intersecting hypersurfaces in 2004. After that, Q. M. Yan and Z. H. Chen \cite{YC}
 show that there exists the multiplicity truncation bounded in the second main theorem of M. Ru \cite{Ru1}. When one applies inequalities of second main theorem type, it is often
crucial to the application to have the inequality with truncated counting functions. For example, all existing constructions of unique range sets depend on a second
main theorem with truncated counting functions. T. T. H. An and H. T. Phuong \cite{AP} gave an explicit estimate for the level of truncation
 in Ru's result. However, the level of truncation is large and is depending on $\varepsilon.$

Recall that hypersurfaces $D_1,\dots, D_q , q > N,$ in $\mathbb P^N(\mathbb C)$ are said to be in general
position if for any distinct $i_1,\dots, i_{N+1}\subset \{1, \dots, q\},$
$$ \cap_{k=1}^{N+1}\text{supp} D_{i_k}=\varnothing.$$

Our main results is as follows:

\begin{theorem}\label{th3}
Let $D_j, j=1,\dots, q,$ be hypersurfaces with degree $d_j$ in general position in $\mathbb P^{N}(\mathbb C).$ Suppose that
 $Q_1, \dots, Q_q$ are homogeneous polynomials with degree $d_j$ defining $D_1, \dots, D_q,$ respectively. We assume that 
$n=lcm(d_1, \dots, d_{q})>1$ and  $\sum_{j=1}^{q}Q_j^{n/d_j}\not\equiv 0.$ Denote by 
$D_{q+1}=\{z\in \mathbb P^N(\mathbb C): \sum_{j=1}^{q}Q_j^{n/d_j}=0\}.$ 
Suppose that $n=lcm(d_1, \dots, d_q)>1.$ Let $f: \mathbb C \to \mathbb P^N(\mathbb C)$ be a
 algebraically nondegenerate holomorphic curve. Then,
$$\| T_f(r) \le \sum_{j=1}^{q}\dfrac{1}{d_j}N_f^{q-1}(r, D_j)+\dfrac{1}{n}N_f^{q-1}(r, D_{q+1}) + o(T_f(r)).$$
\end{theorem}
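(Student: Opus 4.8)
The plan is to reduce this statement to the classical Cartan Second Main Theorem (Theorem~\ref{th2}) via the standard Veronese-type embedding trick, combined with an idea of Ru and Yan--Chen for handling the auxiliary hypersurface $D_{q+1}$. First I would set $n=\mathrm{lcm}(d_1,\dots,d_q)$ and consider the homogeneous polynomials $P_j=Q_j^{n/d_j}$ for $j=1,\dots,q$, all of which now have the common degree $n$, together with $P_{q+1}=\sum_{j=1}^q Q_j^{n/d_j}$, which also has degree $n$ and satisfies $P_{q+1}\not\equiv 0$ on the image of $f$ by hypothesis. Note the linear relation $P_1+\cdots+P_q-P_{q+1}=0$. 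Let $\{v_0,\dots,v_M\}$ be the monomials of degree $n$ in $w_0,\dots,w_N$ (so $M+1=\binom{N+n}{n}$), and let $\Phi:\mathbb P^N(\mathbb C)\to\mathbb P^M(\mathbb C)$ be the associated Veronese embedding, with $F=\Phi\circ f=(v_0(f):\cdots:v_M(f))$. Since $f$ is algebraically nondegenerate of degree $n$, the curve $F$ is linearly nondegenerate, and one has $T_F(r)=nT_f(r)+O(1)$.

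Next, each $P_j$ ($1\le j\le q+1$) is a linear form $L_j$ in the coordinates $v_0,\dots,v_M$, and the linear relation above shows $L_1,\dots,L_q,L_{q+1}$ are \emph{not} in general position in $\mathbb P^M(\mathbb C)$; however, since the original $D_1,\dots,D_q$ are in general position, any $M+1$ of a suitably chosen subcollection — or more precisely, after a standard argument one checks that $L_1,\dots,L_{q+1}$ are still "in $N$-subgeneral position" or that we may apply a subspace/general-position reduction — the hyperplanes $H_j=\{L_j=0\}$ can be fed into Cartan's theorem. Concretely, I would invoke Theorem~\ref{th2} for the linearly nondegenerate curve $F$ and the hyperplanes $H_1,\dots,H_{q+1}$ (using a general-position version, or first replacing $\mathbb P^M$ by the smallest linear subspace containing $F(\mathbb C)$ and redoing the count there, which is exactly where algebraic nondegeneracy of $f$ enters). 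This yields
$$\big\| \ (q+1-(M+1))\,T_F(r)\le \sum_{j=1}^{q+1}N_F^{M}(r,H_j)+o(T_F(r)).$$
The crucial point is then to convert the truncation level and the counting functions back to $f$: one has $N_F(r,H_j)=N_f(r,D_j^{(n)})$ where $D_j^{(n)}$ is cut out by $P_j$, and since $P_j=Q_j^{n/d_j}$ the zero divisor of $P_j(f)$ is $(n/d_j)$ times that of $Q_j(f)$, giving $\frac1n N_F^{M}(r,H_j)\le \frac{1}{d_j}N_f^{q-1}(r,D_j)$ once the truncation is adjusted — this is the combinatorial heart of the estimate, and is where I expect the main obstacle to lie. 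Similarly $\frac1n N_F^{M}(r,H_{q+1})\le \frac1n N_f^{q-1}(r,D_{q+1})$.

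The main obstacle is thus twofold and concentrated in the passage from $\mathbb P^M$ back to $\mathbb P^N$: (i) controlling the coefficient $q+1-(M+1)$ on the left — a naive application of Cartan gives a negative or useless constant because $M$ is huge, so one must instead restrict $F$ to the linear span of its image (of dimension $N'\le M$) and use that $f$ algebraically nondegenerate forces the $L_j$ restricted there to remain in general position with the \emph{right} count, ultimately producing the clean coefficient $1$ multiplying $T_f(r)$ after dividing by $n$; and (ii) sharpening the truncation from the Cartan level down to $q-1$, which requires the Yan--Chen-type lemma relating the truncated counting function of $Q_j^{n/d_j}(f)$ at level $M$ to that of $Q_j(f)$ at level $q-1$, exploiting that only $q-1$ of the hypersurfaces can meet at a point (general position) so that at each zero the relevant local contribution is bounded by $q-1$ after accounting for the $(n/d_j)$-fold multiplicity. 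I would carry out (i) first to pin down the correct effective dimension and general-position statement, then (ii) as a local multiplicity computation, and finally assemble the pieces and divide through by $n$.
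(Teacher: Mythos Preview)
Your Veronese approach leads to a genuine dead end, and the fix you propose does not work. Since $f$ is algebraically nondegenerate, the Veronese lift $F=\Phi\circ f:\mathbb C\to\mathbb P^M(\mathbb C)$ is \emph{linearly} nondegenerate, so ``the smallest linear subspace containing $F(\mathbb C)$'' is all of $\mathbb P^M$ and restricting to it changes nothing: you are still stuck with the useless constant $(q+1)-(M+1)<0$ on the left of Cartan's inequality. Moreover the hyperplanes $H_1,\dots,H_{q+1}$ are not in general position in $\mathbb P^M$ (they satisfy $L_1+\dots+L_q=L_{q+1}$), so Theorem~\ref{th2} does not even apply to them directly; invoking a subgeneral-position/Nochka variant only worsens the constant. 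Your obstacle~(i) is therefore not a technicality to be patched but the actual failure point of the strategy.

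The idea you are missing is that one should not embed into $\mathbb P^M$ at all, but map directly to $\mathbb P^{q-1}$ via
\[
F=(Q_1(f):\dots:Q_q(f))
\]
(after replacing $Q_j$ by $Q_j^{n/d_j}$ to equalize degrees). General position of the $D_j$ together with Hilbert's Nullstellensatz yields $\|f(z)\|^n\le c\max_j|Q_j(f(z))|$, whence $T_F(r)=nT_f(r)+O(1)$. In $\mathbb P^{q-1}$ the $q$ coordinate hyperplanes $\{y_i=0\}$ together with $\{y_0+\dots+y_{q-1}=0\}$ are $q+1$ hyperplanes in \emph{genuine} general position, and Theorem~\ref{th2} applied to $F$ gives the constant $(q+1)-q=1$ on the left and truncation level $q-1$ automatically---no Yan--Chen lemma, no local multiplicity combinatorics (your obstacle~(ii)) is needed. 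Pulling back identifies $N_F^{q-1}(r,\{y_i=0\})=N_f^{q-1}(r,D_{i+1})$ and the sum-hyperplane with $D_{q+1}$; dividing by $n$ finishes.
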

From Theorem \ref{th3}, we get the following result:
\begin{cor}\label{corth3}
Let $D_j, j=1,\dots, N+1,$ be hypersurfaces with degree $d_j$ in general position in $\mathbb P^{N}(\mathbb C).$ Suppose that
 $Q_1, \dots, Q_{N+1}$ are homogeneous polynomials with degree $d_j$ defining $D_1, \dots, D_{N+1},$ respectively. 
We assume that $n=lcm(d_1, \dots, d_{N+1})>1$ and  $\sum_{j=1}^{N+1}Q_j^{n/d_j}\not\equiv 0.$ Denote by 
$D_{N+2}=\{z\in \mathbb P^N(\mathbb C): \sum_{j=1}^{N+1}Q_j^{n/d_j}=0\}.$ 
Let $f: \mathbb C \to \mathbb P^N(\mathbb C)$ be a
 algebraically nondegenerate holomorphic curve. Then,
$$\| T_f(r) \le \sum_{j=1}^{N+1}\dfrac{1}{d_j}N_f^{N}(r, D_j)+\dfrac{1}{n}N_f^{N}(r, D_{N+2}) + o(T_f(r)).$$
\end{cor}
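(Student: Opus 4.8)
The plan is to derive Corollary \ref{corth3} directly from Theorem \ref{th3} by the obvious specialization $q = N+1$. First I would apply Theorem \ref{th3} with the hypersurfaces $D_1, \dots, D_{N+1}$ (so the index set has exactly $q = N+1$ elements), which are assumed to be in general position in $\mathbb P^N(\mathbb C)$; the hypothesis $n = \mathrm{lcm}(d_1,\dots,d_{N+1}) > 1$ and $\sum_{j=1}^{N+1} Q_j^{n/d_j} \not\equiv 0$ match the hypotheses of the theorem verbatim, and the auxiliary hypersurface $D_{q+1}$ produced by the theorem is precisely $D_{N+2} = \{\sum_{j=1}^{N+1} Q_j^{n/d_j} = 0\}$. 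The conclusion of Theorem \ref{th3} then reads
$$\| T_f(r) \le \sum_{j=1}^{N+1}\frac{1}{d_j} N_f^{q-1}(r, D_j) + \frac{1}{n} N_f^{q-1}(r, D_{N+2}) + o(T_f(r)),$$
with $q - 1 = N$, which is exactly the asserted inequality. So in principle the corollary is immediate.

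The one genuine check is that the general-position hypothesis on $N+1$ hypersurfaces in $\mathbb P^N(\mathbb C)$ is compatible with the requirement in Theorem \ref{th3} that the $D_j$ be in general position as a family of $q > N$ hypersurfaces. Here $q = N+1 > N$, so the definition applies: one needs $\cap_{k=1}^{N+1} \mathrm{supp}\, D_{i_k} = \varnothing$ for every choice of $N+1$ distinct indices among $\{1,\dots,N+1\}$ — but there is only one such choice, namely all of them, so the condition is simply $\bigcap_{j=1}^{N+1} \mathrm{supp}\, D_j = \varnothing$, which is exactly what ``in general position'' means for $N+1$ hypersurfaces in $\mathbb P^N$. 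Hence the hypothesis transfers without loss.

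I do not anticipate a real obstacle here: the corollary is a pure specialization, and the only thing to verify is the bookkeeping that $q - 1$ becomes $N$ and that the definition of general position degenerates correctly when $q = N+1$. If one wanted to be thorough, one could also remark that the algebraic nondegeneracy of $f$ is preserved (it is a hypothesis on $f$ alone, unchanged by the choice of hypersurfaces), and that $T_f(r) \to \infty$ so the error term $o(T_f(r))$ is meaningful. The proof is therefore one line: apply Theorem \ref{th3} with $q = N+1$.
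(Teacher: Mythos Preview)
Your proposal is correct and matches the paper's approach exactly: the paper simply states ``From Theorem \ref{th3}, we get the following result'' and gives no separate proof, treating the corollary as the immediate specialization $q=N+1$. Your verification that the general-position condition and the truncation level $q-1=N$ line up is precisely the bookkeeping implicit in that one-line deduction.
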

Note that the truncated level $N$ of the counting function in Corollary \ref{corth3} is much smaller than the previous results
  of all other authors \cite{AP, S} and is independent of $\varepsilon.$ From  Theorem \ref{th3}, we have the result about the
 algebraically degeneracy of holomorphic map from $\mathbb C$ to $\mathbb P^N(\mathbb C)$ as follows.
\begin{theorem}\label{th4}
Let $D_j, j=1,\dots, q+1,$ be hypersurfaces with degree $n$ as in Theorem \ref{th3}. Let 
$f: \mathbb C \to \mathbb P^N(\mathbb C)$ be a holomorphic curve such that image of $f$ intersect $D_j, j=1, \dots, q+1,$ with multiplicity at least $l_j,$ respectively.
Suppose that $\sum_{j=1}^{q+1}\dfrac{1}{l_j}<\dfrac{1}{q-1},$ then $f$ is a algebraically degenerate holomorphic curve.
\end{theorem}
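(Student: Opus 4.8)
The plan is to argue by contradiction: assume $f$ is algebraically non-degenerate, so that Theorem \ref{th3} applies. First I would record the hypotheses in the form Theorem \ref{th3} needs. All $D_j$, $j=1,\dots,q$, have the common degree $n$, so $\mathrm{lcm}(d_1,\dots,d_q)=n>1$ by assumption, and the auxiliary hypersurface $D_{q+1}=\{\sum_{j=1}^q Q_j^{n/d_j}=0\}=\{\sum_{j=1}^q Q_j=0\}$ is exactly the $(q+1)$-st hypersurface from the statement, so the notation is consistent. Thus Theorem \ref{th3} gives, with $d_j=n$ for all $j$,
\[
\| \; T_f(r)\le \sum_{j=1}^{q}\frac{1}{n}N_f^{q-1}(r,D_j)+\frac{1}{n}N_f^{q-1}(r,D_{q+1})+o(T_f(r))
=\sum_{j=1}^{q+1}\frac{1}{n}N_f^{q-1}(r,D_j)+o(T_f(r)).
\]

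The next step is to bound each truncated counting function using the multiplicity hypothesis. Since $f$ meets $D_j$ with multiplicity at least $l_j$, every zero of $Q_j(f)$ has order $\ge l_j$; hence in forming $n_f^{q-1}(r,D_j)$ each such zero contributes $\min(\mathrm{ord},q-1)$, while in forming the ordinary $n_f(r,D_j)$ it contributes the full order, which is at least $l_j$ times as large divided appropriately — more precisely $n_f^{q-1}(r,D_j)\le \frac{q-1}{l_j}\,n_f(r,D_j)$ whenever $l_j\ge q-1$, and in general $\min(k,q-1)\le \frac{q-1}{l_j}k$ for $k\ge l_j$. Integrating, $N_f^{q-1}(r,D_j)\le \frac{q-1}{l_j}N_f(r,D_j)$. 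Combining this with the First Main Theorem $N_f(r,D_j)\le n\,T_f(r)+O(1)$ (Theorem 1, degree $n$) yields
\[
\frac1n N_f^{q-1}(r,D_j)\le \frac{q-1}{n\,l_j}N_f(r,D_j)\le \frac{q-1}{l_j}T_f(r)+O(1).
\]

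Substituting into the displayed second main theorem inequality gives
\[
\| \; T_f(r)\le (q-1)\Big(\sum_{j=1}^{q+1}\frac1{l_j}\Big)T_f(r)+o(T_f(r))+O(1).
\]
By hypothesis $\sum_{j=1}^{q+1}\frac1{l_j}<\frac1{q-1}$, so the coefficient $(q-1)\sum_{j=1}^{q+1}l_j^{-1}$ is a constant strictly less than $1$; rearranging, $(1-c)\,T_f(r)\le o(T_f(r))+O(1)$ with $c<1$, which forces $T_f(r)=O(1)$, i.e. $f$ is constant — contradicting algebraic non-degeneracy (a non-degenerate curve into $\mathbb P^N(\mathbb C)$ has unbounded characteristic). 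Hence $f$ must be algebraically degenerate.

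The only genuinely delicate point is the truncation estimate $N_f^{q-1}(r,D_j)\le \frac{q-1}{l_j}N_f(r,D_j)$: one must be careful that the elementary inequality $\min(k,q-1)\le \frac{q-1}{l_j}\,k$ is used only at points where the order $k$ is at least $l_j$, which is guaranteed by the multiplicity hypothesis at \emph{every} intersection point of $f(\mathbb C)$ with $D_j$ (so the inequality holds termwise in $n_f$, hence after integrating in $N_f$, including the $\log r$ terms coming from zeros at the origin). Everything else is assembling Theorem \ref{th3}, the First Main Theorem, and the arithmetic hypothesis; no further obstacle is expected.
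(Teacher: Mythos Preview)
Your proof is correct and follows essentially the same route as the paper: contradiction via Theorem \ref{th3}, then the truncation/multiplicity estimate $N_f^{q-1}(r,D_j)\le \tfrac{q-1}{l_j}N_f(r,D_j)$, then the First Main Theorem. The only cosmetic difference is that the paper factors that estimate through $N_f^{q-1}\le (q-1)N_f^{1}\le \tfrac{q-1}{l_j}N_f$, whereas you verify $\min(k,q-1)\le \tfrac{q-1}{l_j}k$ for $k\ge l_j$ directly.
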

Theorem \ref{th4} is the first result for algebraically degenerate of holomorphic curve intersecting $N+2$ hypersurfaces
with suitable multiplicity (we choose $q=N+1$).

Now, we give a application of Theorem \ref{th3} in uniqueness problem for holomorphic curve sharing hypersurfaces.
\begin{theorem}\label{th5}
Let $D_j, j=1,\dots, q+1,$ be hypersurfaces with degree $n$ as in Theorem \ref{th3}. Let
 $f, g : \mathbb C \to \mathbb P^N(\mathbb C)$ be two algebraically nondegenerate
holomorphic curves, and $n$ be a integer with $n>2(q^2-1).$ Suppose that $f(z)=g(z)$ on $\cup_{j=1}^{q+1}(f^{-1}(D_j)
\cup g^{-1}(D_j)),$ then $f\equiv g.$
\end{theorem}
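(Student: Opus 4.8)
The plan is to derive a contradiction from the assumption $f\not\equiv g$ by playing the shared-hypersurface hypothesis against the Second Main Theorem of Theorem \ref{th3}. First I would record the key consequence of sharing: since $f(z)=g(z)$ for every $z\in\bigcup_{j=1}^{q+1}\bigl(f^{-1}(D_j)\cup g^{-1}(D_j)\bigr)$, each zero of $Q_j(f)$ is also a zero of $Q_j(g)$ at the same point (and conversely), so the divisors of $Q_j(f)$ and $Q_j(g)$ have the same support for $j=1,\dots,q+1$; in particular $Q_j(f)(z)=0 \Leftrightarrow Q_j(g)(z)=0$. The standard trick is then to look, for each fixed $j$, at the meromorphic function $h_j:=Q_j(f)/Q_j(g)$ (after fixing reduced representations), or more robustly at the auxiliary holomorphic curve $F=(f_0g_0:\dots)$ type construction; but the cleanest route here is to estimate $N_{f}^{1}(r,D_j)$ from below by a ramification-type count and bound $N_f^{q-1}$ from above by $(q-1)N_f^{1}$.

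The core estimate goes as follows. Apply Theorem \ref{th3} to $f$ with the $q+1$ hypersurfaces $D_1,\dots,D_{q+1}$ (all of common degree $n$, so $\mathrm{lcm}=n$ and the $n/d_j=1$, and $D_{q+2}$ is the prescribed auxiliary hypersurface); this gives
$$\| \ T_f(r)\le \frac{1}{n}\sum_{j=1}^{q+1}N_f^{q-1}(r,D_j)+\frac{1}{n}N_f^{q-1}(r,D_{q+2})+o(T_f(r)),$$
and symmetrically for $g$. Using $N_\bullet^{q-1}(r,D)\le (q-1)N_\bullet^{1}(r,D)$ and the First Main Theorem $N_\bullet^{1}(r,D_j)\le N_\bullet(r,D_j)\le nT_\bullet(r)+O(1)$, one sees that the shared zeros force $N_f^{1}(r,D_j)$ and $N_g^{1}(r,D_j)$ to be comparable up to $o(T)$. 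The decisive point is that if $f\not\equiv g$, then $f$ and $g$ are distinct points of $\mathbb P^N$ off the shared set, and the difference $Q_j(f)Q_k(g)-Q_j(g)Q_k(f)$ (or a Wronskian-type / determinant combination across several indices) is a nonzero holomorphic function whose zero divisor contains all the common zeros of the $Q_j(f)$'s; applying Jensen's formula to this function bounds $\sum_j N_f^{1}(r,D_j)$ above by $(T_f(r)+T_g(r))\cdot(\text{something like }2)+o(T)$ — crucially \emph{without} the factor $q-1$. Feeding this back into the two Second Main Theorem inequalities, adding them, and collecting terms yields an inequality of the shape
$$\| \ \bigl(T_f(r)+T_g(r)\bigr)\le \frac{2(q^2-1)}{n}\bigl(T_f(r)+T_g(r)\bigr)+o\bigl(T_f(r)+T_g(r)\bigr),$$
where the constant $2(q^2-1)$ arises as $(q+1)$ hypersurfaces times the truncation factor $(q-1)$, halved by the pairing/determinant argument, plus the contribution of $D_{q+2}$. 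Since $n>2(q^2-1)$, the coefficient on the right is $<1$, which is impossible unless $T_f(r)+T_g(r)$ does not tend to infinity — contradicting algebraic nondegeneracy. Hence $f\equiv g$.

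The main obstacle, and the step I would spend the most care on, is making the ``pairing/determinant'' function rigorous: one must produce a single nonzero holomorphic (or meromorphic) function whose divisor dominates, to level $1$, the sum of the divisors of $Q_1(f),\dots,Q_{q+1}(f)$ restricted to the shared locus, while keeping its characteristic function controlled by $O(T_f+T_g)$ with an explicit constant. The natural candidate is a determinant built from the vectors $\blf=(f_0,\dots,f_N)$, $\blF=(g_0,\dots,g_N)$ evaluated against the monomials appearing in the $Q_j$, exploiting that $f=g$ on the shared set forces rank drop; one then checks this determinant is not identically zero precisely because $f\not\equiv g$ and the $D_j$ are in general position, and estimates its growth by the lemma on logarithmic derivatives / the product-to-sum bound $\log|{\det}|\le \sum \log\|\cdot\|+o(T)$. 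Getting the constant down to exactly $2(q^2-1)$ (rather than something larger) is where the general-position hypothesis and the truncation bookkeeping must be used most sharply; any slack there weakens the hypothesis $n>2(q^2-1)$.
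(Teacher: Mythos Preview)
Your overall strategy---contradiction via Theorem~\ref{th3} plus the sharing hypothesis---is correct, but two concrete missteps derail the execution. First, you misread the setup: the $q+1$ hypersurfaces of Theorem~\ref{th5} are the $q$ hypersurfaces $D_1,\dots,D_q$ \emph{together with} the auxiliary $D_{q+1}$ already produced by Theorem~\ref{th3}; there is no $D_{q+2}$, and the Second Main inequality you need is simply $\|\ nT_f(r)\le\sum_{j=1}^{q+1}N_f^{q-1}(r,D_j)+o(T_f(r))$. Second, and more seriously, your candidate auxiliary function $Q_j(f)Q_k(g)-Q_j(g)Q_k(f)$ has degree $n$ in each of $f$ and $g$, so its Nevanlinna characteristic is bounded only by $n(T_f+T_g)+O(1)$; running the argument with that bound yields the vacuous inequality $T_f+T_g\le 2(q^2-1)(T_f+T_g)+o(\cdot)$ for every $n$. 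The determinant construction you offer as a fallback is left too vague to close this gap, and your accounting for the constant $2(q^2-1)$ (``halved by the pairing argument, plus the contribution of $D_{q+2}$'') does not correspond to any computation that actually goes through.

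The paper's auxiliary function is elementary and bypasses all of this. Since $f\not\equiv g$ as maps to $\mathbb P^N(\mathbb C)$, there exist coordinate indices $\alpha\ne\beta$ with $h:=\dfrac{f_\alpha}{f_\beta}-\dfrac{g_\alpha}{g_\beta}\not\equiv 0$. At every point $z_0$ of the shared set one has $f(z_0)=g(z_0)$ in $\mathbb P^N(\mathbb C)$, hence $h(z_0)=0$; consequently, for each $j=1,\dots,q+1$,
\[
N_f^{q-1}(r,D_j)\ \le\ (q-1)\,N_f^{1}(r,D_j)\ \le\ (q-1)\,N_h(r,0)\ \le\ (q-1)\bigl(T_f(r)+T_g(r)\bigr)+O(1),
\]
the last step because $T_h\le T\bigl(r,f_\alpha/f_\beta\bigr)+T\bigl(r,g_\alpha/g_\beta\bigr)+O(1)\le T_f+T_g+O(1)$. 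Summing this single estimate over the $q+1$ indices gives $\|\ nT_f(r)\le(q^2-1)(T_f+T_g)+o(T_f)$; the same holds for $g$ by symmetry; adding yields $\|\ (n-2(q^2-1))(T_f+T_g)\le o(T_f+T_g)$, the desired contradiction. Note that $(q^2-1)=(q+1)(q-1)$ arises from summing the \emph{same} bound over $q+1$ indices, and the factor $2$ from adding the inequalities for $f$ and $g$; no determinant, no general-position refinement, and no bound of the form $\sum_j N_f^{1}(r,D_j)\le 2(T_f+T_g)$ is needed (nor is the latter true in general).
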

 We reduce the number hypersurfaces in before results. The authors \cite{DR, P} studied the
 uniqueness problem with a number lager hypersurfaces. In Theorem \ref{th5}, if we take $q=N+1,$ then we only need $N+2$ hypersurfaces in 
$\mathbb P^N(\mathbb C)$ for uniqueness problem of holomorphic curve.

\section{Proof of theorems}

\begin{proof}[Proof of Theorem \ref{th3}]
First, we suppose that $Q_1, \dots, Q_q$ have the same degree $n.$ Given $z\in \mathbb C$ there exists a renumbering 
$\{i_0,\dots, i_{N}\}$ of the indices $\{1,\dots , q \}$
such that
\begin{align}\label{cta1}
0<|Q_{i_0}\circ (f (z ))| \le  |Q_{i_2}\circ (f(z))| \le  \dots \le |Q_{i_{N}}\circ (f(z))|\le \min_{l\not \in\{i_0, \dots, i_N\}}
|Q_{l}\circ (f(z))|.
\end{align}
 From the hypothesis, $D_1, \dots, D_q$ are hypersurfaces  which are located in general position  in $\PP^{N}(\C),$
 we have for every subset $\{i_0, \dots, i_N\} \subset \{1, \dots, q\},$
$$ \text{Supp}D_{i_{0}} \cap \dots \cap \text{Supp}D_{i_N}= \emptyset.$$
Thus by Hilbert's Nullstellensatz \cite{VW}, for
any integer k, $0 \le k \le N,$ there is an integer $m_k > n$ such that
$$ x_k^{m_k}=\sum_{j=0}^{N}b_{k_j}(x_0, \dots, x_N) Q_{i_j}(x_0, \dots, x_N),$$
where $b_{k_j}$ are homogeneous forms with coefficients in $\mathbb C$ of degree $m_k-n.$
This implies
$$ 
|f_k(z )|^{m_k}\le c_1||f (z )||^{m_k-n}\max\{|Q_{i_0}\circ (f (z ))|, \dots, |Q_{i_N}\circ (f (z ))|\},$$
where $c_1$ is a positive constant depending only on the coefficients of $b_{k_j}, 0\le j\le N, 0\le k\le N,$ thus depends only
 on the coefficients of $Q_l, 1\le l\le q.$ Therefore,
\begin{align}\label{cta2}
||f(z )||^{n}\le c_1\max\{|Q_{i_0}\circ (f (z ))|, \dots, |Q_{i_N}\circ (f (z ))|\}.
\end{align}
Since $D_j, j=1, \dots, q,$ are hypersurfaces in general position, then  $F=(Q_1\circ(f(z)), \dots, Q_q\circ(f(z)))$ is a 
holomorphic curve from $\mathbb C$ into $\mathbb P^{q-1}(\mathbb C)$ with reduced form
 $F=(Q_1\circ(f(z)):\dots:Q_q\circ(f(z))).$ From (\ref{cta2}), we have
$$ ||f(z )||^{n}\le c_1\max\{|Q_{1}\circ (f (z ))|, \dots, |Q_{q}\circ (f (z ))|\}.$$
This implies 
\begin{align}\label{41}
T_F(r)=nT_f(r)+O(1).
\end{align} 
Applying Theorem \ref{th2} for $F$, with the hyperplanes which locate general 
position in $\mathbb P^{q-1}(\mathbb C)$
$$ H_i=\{y_i=0\}, 0\le i\le q-1,$$
and
$$ H_{q}=\{y_0+\dots+y_{q-1}=0\} $$
yields that the inequality
\begin{align}\label{42}
\|T_F(r)\le \sum_{i=0}^{q}N_F^{q-1}(r, H_i)+o(T_f(r)).
\end{align}
We have $N_F^{q-1}(r, H_i)= N_f^{q-1}(r, D_{i+1})$ for all $i=0,\dots, q-1,$ and  
$N_F^{q-1}(r, H_{q})=N_f^{q-1}(r, D_{q+1}).$ 
Therefore from (\ref{41}) and (\ref{42}), we obtain 
\begin{align*}
\| nT_f(r)\le \sum_{j=1}^{q+1}N_f^{q-1}(r, D_j)+o(T_f(r)).
\end{align*}
If $D_1, \dots, D_q$ have not the same degree, we consider the hypersurfaces $D_1^{n/d_1}, \dots, D_q^{n/d_q},$
 where $n=lcm(d_1, \dots, d_q).$ Then $D_1^{n/d_1}, \dots, D_q^{n/d_q}$ have the same degree $n.$ Apply to above
 result, we get
\begin{align*}
\| nT_f(r)\le \sum_{j=1}^{q}N_f^{q-1}(r, D_j^{n/d_j})+N_f^{q-1}(r, D_{q+1})+o(T_f(r)).
\end{align*}
Note that $N_f^{q-1}(r, D_j^{n/d_j})=\dfrac{n}{d_j}N_f^{q-1}(r, D_j)$ for all $j=1, \dots, q.$ Then we obtain
\begin{align*}
\| T_f(r)\le \sum_{j=1}^{q}\dfrac{1}{d_j}N_f^{q-1}(r, D_j^{n/d_j})+\dfrac{1}{n}N_f^{q-1}(r, D_{q+1})+o(T_f(r)).
\end{align*}
\end{proof}

\begin{proof}[Proof of Theorem \ref{th4}]
Suppose that $f$ is a algebraically nondegenerate holomorphic curve. From Theorem \ref{th3} and the First Main Theorem, we have
\begin{align*}
\| nT_f(r)&\le \sum_{j=1}^{q+1}N_f^{q-1}(r, D_j)+o(T_f(r))\\
&\le (q-1)\sum_{j=1}^{q+1}N_f^1(r, D_j)+o(T_f(r)).
\end{align*}
Then, we have
\begin{align*}
\| nT_f(r)&\le (q-1)\sum_{j=1}^{q+1}\dfrac{1}{l_j}N_f(r, D_j)+o(T_f(r))\\
&\le (q-1)\sum_{j=1}^{q+1}\dfrac{n}{l_j}T_f(r)+o(T_f(r)).
\end{align*}
This implies 
$$\|\Big(\dfrac{1}{q-1}-\sum_{j=1}^{q+1}\dfrac{1}{l_j}\Big)T_f(r)\le o(T_f(r)),$$
which gives a contradiction with $\sum_{j=1}^{q+1}\dfrac{1}{l_j}<\dfrac{1}{q}.$ Therefore, $f$ must be an algebraically 
degenerate.
\end{proof}

\begin{proof}[Proof of Theorem \ref{th5}]
We suppose that $f\not\equiv g,$ then there are two numbers $\alpha, \beta \in  \{0, \dots , N\},$ $ \alpha\ne \beta$ such that
$f_{\alpha}g_{\beta}\not\equiv f_{\beta}g_{\alpha}.$ Assume that $z_0 \in \cup_{j=1}^{q+1}(f^{-1}(D_j)
\cup g^{-1}(D_j)),$ from condition 
$f(z)=g(z)$ when $z\in \cup_{j=1}^{q+1}(f^{-1}(D_j)
\cup g^{-1}(D_j)),$ we get $f(z_0)=g(z_0).$ This implies $z_0$ is a zero of $\dfrac{f_{\alpha}}{f_{\beta}}-\dfrac{g_{\alpha}}{g_{\beta}}.$ Therefore, we have
\begin{align*}
 N_f^{q-1}(r, D_j) \le (q-1)N_f^{1}(r, D_j)&\le (q-1)N_{ \dfrac{f_{\alpha}}{f_{\beta}}-\dfrac{g_{\alpha}}{g_{\beta}}}(r,0 )\\
&\le (q-1)(T_f(r)+T_g(r))+O(1)
\end{align*}
for all $j=1, \dots, q+1.$
Apply to Theorem \ref{th3}, we obtain
\begin{align}\label{51}
  \| nT_f(r)\le (q^2-1)(T_f(r)+T_g(r))+o(T_f(r)).
\end{align}
Similarly, we have
\begin{align}\label{52}
  \| nT_g(r)\le (q^2-1)(T_f(r)+T_g(r))+o(T_g(r)).
\end{align}
Combining (\ref{51}) and (\ref{52}), we get
$$ \|(n-2(q^2-1))(T_f(r)+T_g(r)) \le o(T_f(r))+o(T_g(r)).$$
This is a contradiction with $n>2(q^2-1).$ Hence $f\equiv g.$
\end{proof}

\end{document}